\let\nofiles\relax

\documentclass[twoside]{article}

\usepackage{subfigure}
\usepackage{mathrsfs}
\usepackage[sumlimits]{amsmath}
\usepackage{amsthm,amssymb,amsfonts}
\usepackage{graphicx,color}
\usepackage{algorithm}
\usepackage{algpseudocode}
\usepackage{titlesec}
\usepackage{float}
\usepackage{caption}
\usepackage{extarrows}

\usepackage[
linktocpage=true,
colorlinks=true,
linkcolor=blue,
citecolor=blue,
urlcolor=blue
]{hyperref}
\usepackage[numbers,sort&compress]{natbib}

\allowdisplaybreaks

\catcode`@=11
\long\def\@makefntext#1{\noindent #1}
\newskip\tabcentering\tabcentering=1000pt plus 1000pt minus 1000pt

\def\MCH#1#2{\setbox0=\hbox{\raise#1\hbox{#2}}\smash{\box0}}

\def\evenhead{}
\def\oddhead{}
\def\@evenhead{\hbox to\textwidth{\footnotesize\rm\thepage\hfill{\small\it\evenhead}}}
\def\@oddhead{\hbox to\textwidth{\footnotesize{\small\it\oddhead}\hfill\footnotesize\rm\thepage}}
\def\@evenfoot{}
\def\@oddfoot{}

\catcode`@=12
\font\bb=cmbx8%
\font\small=cmr8

\font\st=cmbx10 scaled\magstep2%
\def\title#1{{\noindent\st#1}}%
\def\author#1{\vspace*{0.3 true cm}{\noindent\bf #1}}%
\def\abstract#1{\vspace*{1 true cm}{\noindent{\footnotesize{\bb Abstract}\hspace{4true mm}#1}}}%

\newtheorem{definition}{Definition}[section]

\newtheorem{theorem}[definition]{Theorem}

\newtheorem*{remark*}{Remark}

\newtheoremstyle{noparens}%
{}{}%
{\itshape}{}%
{\bfseries}{\bf .}%
{ }%
{\thmname{#1}\thmnumber{ #2}\mdseries\thmnote{#3}}

\theoremstyle{noparens}

\theoremstyle{definition}

\numberwithin{equation}{section}
\numberwithin{figure}{section}

\titleformat*{\section}{\large \bfseries}
\titleformat*{\subsection}{\bfseries}
\titleformat*{\subsubsection}{\normalsize \bfseries}

\begin{document}
%
%
%
%



\title{Polynomial Stability of a Type II Porous Thermoelastic System with Local Memory Damping}%

\author{SUN Ya-nan, ZHANG Qiong}


\vskip10pt


\vskip-10pt

\begin{abstract}
	This paper studies the asymptotic behavior of a one-dimensional Type II porous thermoelastic system with a conservative porous structure and local memory damping applied to the elastic component. Using frequency domain resolvent estimates, we prove  polynomial decay of the associated semigroup. Our results clarify the effect of local memory damping and provide a unified framework for partially damped porous thermoelastic systems.
\end{abstract}

\begin{keywords}
	Thermo-porous-elasticity,\; memory damping,\;  polynomial stability,\;  semigroup
\end{keywords}

\section{Introduction}
Porous elastic materials, also referred to as elastic materials with voids, constitute an important class of media characterized by the presence of microstructures such as pores or cavities.
These materials were systematically introduced in the late 1970s and early 1980s by Cowin and Nunziato \cite{Cowin1,Cowin2,Cowin3}, who developed a continuum theory in which the mass density depends not only on the elastic deformation but also on the volume fraction of the voids.
When thermal effects are incorporated into porous elastic systems, the classical approach relies on Fourier's law of heat conduction, which leads to a parabolic heat equation.
However, this framework predicts an infinite speed of thermal propagation, a feature that contradicts experimental observations.
To overcome this drawback, Green and Naghdi \cite{Green}  proposed three new thermoelastic theories based on the introduction of new constituent variables for the description of heat.
Of particular relevance here is Type II theory,
which excludes the temperature gradient and  leads to  a conservative hyperbolic
heat equation, often called ``without energy dissipation''.
The coupling of  elasticity with voids  and Green-Naghdi Type II  heat conduction has attracted considerable attention in recent years (see, for instance, \cite{Casas, Pamplona2009, Quintanilla2003, Zhang} and references therein), as such  systems exhibit rich interactions among the displacement, volume fraction, and thermal variables.

Compared with the global dissipation, localized dissipation, where effects are confined to subdomains, typically leads to slower polynomial decay rates rather than exponential stability (see, e.g., \cite{LQSZ, Zhang}).
Another important dissipation mechanism arises from memory effects, which account for the hereditary behavior of materials.
For porous thermoelastic systems with global memory damping, general decay results have been established in \cite{Messaoudi2021,Quintanilla2011,2010}, where the energy decay rate is shown to be closely related to the properties of the memory kernel.
Motivated by these developments, the present paper is devoted to the study of a one-dimensional Type II porous thermoelastic system with a conservative porous structure and local memory damping acting on the elastic component.
The mathematical model under consideration is given by the following PDEs system:
\begin{align}\label{sys-new}
	\begin{cases} \displaystyle
		\rho u_{tt} =  \Big[ \mu  u_{x} + \mu^*(x) \int_0^{\infty} g_s(s)(u_{x}(t-s) -u_{x}(t)) ds\Big]_x+\gamma \phi_x -\beta\psi_{xt}  ,\\
		J\phi_{tt}=b\phi_{xx}+m\psi_{xx}-\xi\phi+d\psi_t-\gamma u_x,\\
		a\psi_{tt}=k\psi_{xx}+m\phi_{xx}-d\phi_t-\beta u_{xt},
	\end{cases}
\end{align}
with the following boundary conditions and  initial data:
\begin{align}\label{init} \begin{split}
		& u(0,t) = u(\pi,t) = \phi(0,t) = \phi(\pi,t) = \psi(0,t) = \psi(\pi,t) = 0,\;\;t>0,\\ &
		u(x,0) = u_0(x),\;u_t(x,0)=v_0(x),\;\;u(x,-s) = \nu(x,s),\;s>0,\\ &\phi(x,0)=\phi_0(x),\;
		\phi_t(x,0)=\varphi_0(x),\;
		\psi(x,0)=\psi_0(x),\;\psi_t(x,0)=\theta_0(x).
\end{split}\end{align}

\noindent
In \eqref{sys-new}-\eqref{init}, $u(x,t)$, $\phi(x,t)$, and $\psi(x,t)$ represent the displacement, the volume fraction, and the temperature, respectively. $t \geq 0$ denotes time, and the spatial variable $x$ lies in the interval $[0,\pi]$, corresponding to a rod of length $\pi$.
The physical parameters are assumed to satisfy
\begin{equation}
	\label{assume}
	\begin{gathered}
			\rho>0,\;\mu>0,\;\gamma>0,\;\beta\neq0,\;J>0,\;b>0,\;
		m\neq 0,\;\xi>0,\\
		\;d>0,\;a>0,\;k>0, \;
		\mu\xi>\gamma^2,\;bk>m^2.
	\end{gathered}
\end{equation}
The damping coefficient $\mu^*(\cdot)\in C^{1}(0,\pi)$ is non-negative and satisfies 
\begin{align}
	\label{hmu}
	\mu^*(\cdot) \ge \mu_0 >0 \;  \mbox{ on  }  \;   [0,\tau) , \;\;
	\mu^*(\cdot) =0  \;  \mbox{ on  }  \;  (\tau,\pi],  \;  \mbox{ where  }  \; \tau \in (0,\pi).
\end{align}

The memory kernel  $g(\cdot)$  satisfies the following hypotheses:
\begin{enumerate}
	\item[{\bf (H1)}]$g \in C^2[0,\infty)$, $ g_s  \in L^1(0,\infty)$, $g(s) >  0$, $g_s(s) < 0$, $g_{ss}(s) > 0$ for all $s\ge 0$, and  $g(\infty)=0$.
	\item[{\bf (H2)}]There exists a   constant $K>0$ such that $g_{ss}(s)+K g_s(s)  \ge 0$ for all $s\ge 0$.
\end{enumerate}

Conditions (H1)-(H2)  imply that the memory is strictly decreasing, the rate of memory loss is increasing.
The main objective of this work is to characterize the asymptotic behavior of solutions to   system \eqref{sys-new}-\eqref{init}.
By using the resolvent estimates, we prove polynomial stability of the associated $C_0$-semigroup.

The paper is structured as follows. Section 2 presents the well-posedness. Section 3 is devoted to the proof of the main stability result.
Throughout the paper, $|\cdot|$ and $\langle \cdot,\cdot\rangle$ denote the norm and inner product in $L^2(0,\pi)$, respectively.

\section{Well-posedness}
In this section, we  present well-posedness of the system.
First,
%
we introduce the history variable
$\eta(x,t,s) = u(x,t-s)  -  u(x,t),$ where $t,s>0.$
It is clear that
$\eta_t(x,t,s) + \eta_s(x,t,s)=- u_t(x,t) , $
and the first equation of
system \eqref{sys-new} can be reformulated as
\begin{align}\label{sys}
\rho u_{tt} =[\mu u_{x} + \mu^*(x)\zeta_{x}]_x +\gamma \phi_x -\beta\psi_{xt},\;\;\;  \mbox{   where}  \; \;\;\zeta(x,t) = \int_0^{\infty}g_s(s)\eta(x,t,s)ds,
\end{align}
We now introduce
the weighted memory space
$W=L^2(\mathbb R^+,-g_s;H_0^1(0,\pi)),$
and the associated energy space
$\mathcal H = H_{0}^1(0,\pi)\times L^2(0,\pi)\times W \times (H_0^1(0,\pi)\times L^2(0,\pi))^2$
endowed with the inner product
\begin{align*}
\langle U,U^*\rangle_{\mathcal H} = &\;  \int_0^{\pi} \Big(\rho v\overline{v^*}
+ J \varphi\overline{\varphi^*}
+ a \theta\overline{\theta^*}
+ \mu u_x\overline{u_x^*}
+b \phi_x\overline{\phi_x^*}
+\xi \phi\overline{\phi^*}
+ k \psi_x\overline{\psi_x^*}
+ \gamma(\phi\overline{u_x^*}
+u_x\overline{\phi^*}) \\ &
+m(\phi_x\overline{\psi_x^*} + \psi_x\overline{\phi_x^*})\Big)dx
-\int_0^{\infty}g_s(s)\int_0^{\pi} \mu^*(x) \eta_x\overline{\eta_x^*}dxds,
\end{align*}
for $U=(u,v,\eta(\cdot),\phi,\varphi,\psi,\theta)$ and $U^*=(u^*,v^*,\eta^*(\cdot),\phi^*,\varphi^*,\psi^*,\theta^*)\in \mathcal H$,

Define an unbounded linear operator $\mathcal A: \mathcal D(\mathcal A)\subset \mathcal H\to \mathcal H$ by
\begin{align*}
& \mathcal AU
= \Big(
v,\; {1\over\rho} ((\mu u_{x}+\mu^*(x)\zeta_{x})_x + \gamma\phi_x -\beta\theta_x ),\; -v-\eta_s(\cdot),\;
\varphi,\;\;
{1\over J} (b\phi_{xx}+m\psi_{xx}-\xi\phi +d\theta-\gamma u_x ),\;
\\ &
\hskip 1cm  \; \;
\theta,\;{1\over a}(k\psi_{xx}+m\phi_{xx}-d\varphi-\beta v_x)
\Big),
\\
&
\mathcal D(\mathcal A) = \big\{
U\in\mathcal H
\;\big| \;
v,\varphi,\theta\in H_0^1(0,\pi),\;
(\mu u_{x} +\mu^*(x)\zeta_{x})_x \in L^{2}(0,\pi) ,\;
\eta_s(\cdot)\in W,\;\eta(s=0) = 0,  \\ &\hskip 3cm
b\phi_{xx}+m \psi_{xx}\in L^{2}(0,\pi),\;
k\psi_{xx} + m \phi_{xx}\in L^{2}(0,\pi)\big\}.
\end{align*}
where $U = (u,v,\eta(\cdot),\phi,\varphi,\psi,\theta),\; \zeta = \int_0^{\infty}g_s(s)\eta(s)ds.$
Consequently, system \eqref{sys-new}-\eqref{init} can be written as the abstract evolution equation
$
{dU\over dt} = \mathcal A U,\;\forall\; t>0$ with $U(0)=U_0\doteq(u_0,v_0,\eta_0(\cdot),
\phi_0,\varphi_0,\psi_0,\theta_0)\in\mathcal H.$

The following theorem establishes the well-posedness of system \eqref{sys-new} and the proof of it is standard, we omit it here; see \cite{pazy,LQSZ} for details.
\begin{theorem}\label{th1}
Assume that the physical parameters and the coefficient function $\mu^*(\cdot)$ satisfy \eqref{assume}-\eqref{hmu}, and that the relaxation function $g$ satisfies (H1)-(H2). Then the operator $\mathcal A$ generates a $C_0$-semigroup $ e^{\mathcal A t }$ of contractions on $\mathcal H$, and the resolvent set satisfies $0\in\rho(\mathcal A)$.
\end{theorem}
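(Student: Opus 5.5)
We would prove Theorem \ref{th1} by the Lumer--Phillips theorem. The first step is to check that $\langle\cdot,\cdot\rangle_{\mathcal H}$ is really an inner product equivalent to the usual norm of $\mathcal H$. The pointwise quadratic form
\[
\mu|u_x|^2+\xi|\phi|^2+2\gamma\,\mathrm{Re}(\phi\overline{u_x})
\]
is positive definite because $\mu\xi>\gamma^2$. Likewise
\[
b|\phi_x|^2+k|\psi_x|^2+2m\,\mathrm{Re}(\phi_x\overline{\psi_x})
\]
is positive definite because $bk>m^2$. The memory term is a seminorm, since $-g_s>0$ and $\mu^*\ge0$. We therefore take $W$ with the weight $-g_s\mu^*$ built into its norm, or read $\mathcal H$ modulo this weight. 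Then $\mathcal H$ is a Hilbert space.

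The second step is dissipativity. For $U\in\mathcal D(\mathcal A)$ we compute $\mathrm{Re}\langle \mathcal AU,U\rangle_{\mathcal H}$, integrating by parts in $x$ using the Dirichlet conditions.

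\textbf{Cancellations.}
\begin{itemize}
\item The terms $\mu u_x\overline{v_x}$ cancel against $\mu v_x\overline{u_x}$.
\item The $\gamma$ and $m$ cross terms cancel after taking real parts.
\item The $\beta$ terms cancel: $-\beta\langle\theta_x,v\rangle-\beta\langle v_x,\theta\rangle$ is purely imaginary.
\item The $d$ terms cancel the same way: $\langle d\theta,\varphi\rangle-\langle d\varphi,\theta\rangle$ is purely imaginary.
\item The term $\int\mu^*\zeta_x\overline{v_x}$ cancels with the part $-v$ of the $\eta$-component, since $\zeta=\int g_s\eta\,ds$.
\end{itemize}

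\textbf{The remaining term.} What is left is
\[
\mathrm{Re}\int_0^\infty g_s\int\mu^*\eta_{sx}\overline{\eta_x}\,dx\,ds
=\tfrac12\int_0^\infty g_s\,\partial_s\!\int\mu^*|\eta_x|^2\,dx\,ds .
\]
We integrate by parts in $s$, using $\eta(0)=0$ and $g_s(s)\int\mu^*|\eta_x|^2\to0$ as $s\to\infty$. The latter is justified by a standard truncation argument based on $\eta,\eta_s\in W$. This gives
\[
\mathrm{Re}\langle\mathcal AU,U\rangle=-\tfrac12\int_0^\infty g_{ss}\int\mu^*|\eta_x|^2\le 0
\]
by (H1).

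The third step, which carries most of the work, is showing $0\in\rho(\mathcal A)$, i.e.\ solving $\mathcal AU=F$ uniquely with $\|U\|\le C\|F\|$. This also gives maximality, since $\rho(\mathcal A)$ is open and $\lambda I-\mathcal A$ is then onto for small $\lambda>0$, which suffices for Lumer--Phillips.

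\textbf{Solving $\mathcal AU=F$.} From the components we read off $v=f_1$, $\varphi=f_4$, $\theta=f_6$, all in $H_0^1$.

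\emph{Memory component.} The equation $\eta_s=-f_1-f_2(s)$ with $\eta(0)=0$ gives
\[
\eta(s)=-sf_1-\int_0^s f_2 .
\]
We must check $\eta\in W$ and $\eta_s\in W$. This is where (H2) is needed: it yields $-g_s(s)\le -g_s(0)e^{-Ks}$, hence $\int_0^\infty s^2(-g_s)\,ds<\infty$. Combined with a Hardy-type inequality
\[
\int_0^\infty -g_s\Big|\int_0^s f_2\Big|^2 \le C\int_0^\infty -g_s|f_2|^2 ,
\]
this bounds $\eta$ in $W$. Then $\zeta$ is determined, with $\zeta_x\in L^2$.

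\emph{Remaining elliptic system.} What remains for $(u,\phi,\psi)$ is
\begin{align*}
&(\mu u_x)_x+\gamma\phi_x=\rho f_3+\beta\theta_x-(\mu^*\zeta_x)_x,\\
&b\phi_{xx}+m\psi_{xx}-\xi\phi-\gamma u_x=\tilde f,\\
&k\psi_{xx}+m\phi_{xx}=\hat f ,
\end{align*}
with right-hand sides in $H^{-1}$. The associated bilinear form on $(H_0^1)^3$ is coercive by the two positivity conditions on the coefficients. Lax--Milgram therefore gives a unique solution with the required bound, so $\mathcal A^{-1}$ is bounded.

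\textbf{Remaining points.} We still need density of $\mathcal D(\mathcal A)$, which follows for instance from reflexivity plus maximal dissipativity. The contraction semigroup then follows. The main obstacle is the memory component in the third step: verifying $\eta,\eta_s\in W$ and controlling the weighted Hardy term, where (H2) is essential. A secondary technical point is the degenerate weight $\mu^*$, handled as in \cite{LQSZ}.
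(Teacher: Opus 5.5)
Your proposal is correct. It is the standard Lumer--Phillips argument that the paper omits as routine: dissipativity by integrating by parts in $s$ (using $\eta(0)=0$ and decay at infinity), and $0\in\rho(\mathcal A)$ by first solving the decoupled memory ODE (with (H2) giving the exponential bound on $-g_s$ and the weighted Hardy inequality), then applying Lax--Milgram to the $(u,\phi,\psi)$ system, much as in the proof of Theorem~\ref{th2}.

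You also correctly note that the $\mu^*$-weighted memory term is only a seminorm on $L^2(\mathbb R^+,-g_s;H_0^1(0,\pi))$, so the weight must be built into $W$, a point the paper glosses over. The swapped labels $f_2$ and $f_3$ for the $\eta$- and $v$-components are only cosmetic.
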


\begin{theorem}\label{th2}
Let the conditions in Theorem \ref{th1} hold. Then, $i\mathbb{R}\subset \rho(\mathcal A)$, the resolvent of the operator $\mathcal A$.
\end{theorem}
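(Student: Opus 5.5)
Since $0\in\rho(\mathcal A)$ by Theorem \ref{th1}, I will argue by contradiction in the standard way. If $i\mathbb{R}\not\subset\rho(\mathcal A)$, there are $\omega\in\mathbb{R}$ with $\omega\neq0$ and a sequence $\lambda_n\to\omega$ with $|\lambda_n|<|\omega|$ and $\|(i\lambda_n-\mathcal A)^{-1}\|\to\infty$. (It is enough to show that $i\mathbb{R}$ contains no spectral point. Because the resolvent is compact, the spectrum consists of eigenvalues. The compactness follows from Rellich's theorem together with the standard compactness argument for the memory component under (H1)–(H2).) So it suffices to prove that $\mathcal A$ has no purely imaginary eigenvalue $i\lambda$ with $\lambda\neq0$. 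If compactness of the memory part is doubtful, I will instead run the same argument with unit vectors $U_n\in\mathcal D(\mathcal A)$ satisfying $(i\lambda_n-\mathcal A)U_n\to0$. The eigenvalue version is the core, so I describe that.

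Let $\mathcal AU=i\lambda U$ with $\lambda\neq0$. Dissipativity gives
\[
0=\mathrm{Re}\langle\mathcal AU,U\rangle_{\mathcal H}=\frac12\int_0^\infty g_{ss}(s)\int_0^\pi\mu^*|\eta_x|^2dxds .
\]
Using (H2), i.e. $g_{ss}\ge -Kg_s$, this yields $\eta_x\equiv0$ on $(0,\tau)\times\mathbb{R}^+$. The memory equation $i\lambda\eta+\eta_s=-v$ with $\eta(0)=0$ has solution $\eta(s)=-\frac{v}{i\lambda}(1-e^{-i\lambda s})$. Hence $v_x=0$, so $u_x=0$ on $(0,\tau)$. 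Since $u(0)=0$, we get $u=v=0$ on $(0,\tau)$, and also $\zeta_x=0$ there.

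The first equation then reduces on $(0,\tau)$ to $\gamma\phi_x-\beta\theta_x=0$, with $\theta=i\lambda\psi$, i.e. $\gamma\phi_x=i\lambda\beta\psi_x$. Integrating and using $\phi(0)=\psi(0)=0$ gives $\gamma\phi=i\lambda\beta\psi$ on $(0,\tau)$. Substituting into the second and third equations with $u\equiv0$ gives two second-order ODEs for $\psi$ alone:
\begin{align*}
(-J\lambda^2+\xi)\tfrac{i\lambda\beta}{\gamma}\psi-\Big(\tfrac{bi\lambda\beta}{\gamma}+m\Big)\psi_{xx}-i\lambda d\psi&=0,\\
-a\lambda^2\psi-\Big(k+\tfrac{mi\lambda\beta}{\gamma}\Big)\psi_{xx}+i\lambda d\tfrac{i\lambda\beta}{\gamma}\psi&=0 .
\end{align*}

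This step is the main obstacle, and I expect it to need care. I would compare real and imaginary parts, using $m\neq0$, $\beta\neq0$, $d>0$, to eliminate $\psi_{xx}$ and obtain $c(\lambda)\psi=0$ with $c(\lambda)\neq0$. That gives $\psi=\phi=0$ on $(0,\tau)$. The second equation alone has the complex coefficient $\frac{bi\lambda\beta}{\gamma}+m$ in front of $\psi_{xx}$. Taking the combination that cancels $\psi_{xx}$ leaves a complex polynomial in $\lambda$ multiplying $\psi$, and I will check that its real or imaginary part is nonzero for real $\lambda\neq0$. If some exceptional $\lambda$ survives, I would handle it by differentiating once more or by using the boundary conditions at $x=0$.

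Once all components vanish on $(0,\tau)$, the remaining system on $(\tau,\pi)$ is a linear ODE system without memory, since $\mu^*=0$ there. Its unknowns are $(u,\phi,\psi)$, and it is regular because the principal matrices $\mathrm{diag}(\mu,\cdot)$ and $\begin{pmatrix}b&m\\ m&k\end{pmatrix}$ are invertible. The data $(u,u_x,\phi,\phi_x,\psi,\psi_x)$ at $x=\tau$ are all zero, by continuity of $u,\phi,\psi$ and of the fluxes $\mu u_x+\mu^*\zeta_x$, $b\phi_x+m\psi_x$, $k\psi_x+m\phi_x$, which lie in $H^1$. Uniqueness for the Cauchy problem then gives $U\equiv0$ on $(0,\pi)$, because $\eta=-\frac{v}{i\lambda}(1-e^{-i\lambda s})$ vanishes as well. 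This contradicts $U$ being an eigenvector, so $i\mathbb{R}\subset\rho(\mathcal A)$.
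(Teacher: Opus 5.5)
\textbf{First gap: the resolvent is not compact, so ruling out eigenvalues is not enough.} Your reduction to eigenvalues rests on the claim that $\mathcal A$ has compact resolvent, and that claim is false. The domain $\mathcal D(\mathcal A)$ only requires $\eta_s\in W$ and $\eta(0)=0$, so the history component gains no compactness. For data $F=(0,0,\chi(s)w,0,0,0,0)$ the history component of $\mathcal A^{-1}F$ is $\eta=-\big(\int_0^s\chi\big)w$. Letting $w$ run through a bounded, non-precompact family in $H_0^1(0,\tau)$ shows that $\mathcal A^{-1}$ is not compact.

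Hence excluding purely imaginary eigenvalues does not give $i\mathbb R\subset\rho(\mathcal A)$; you also need surjectivity of $i\lambda-\mathcal A$. The paper obtains it as follows:
\begin{itemize}
\item it solves the $\eta$-equation explicitly and substitutes $\zeta$ into the $u$-equation, which produces the complex coefficient $\Phi$;
\item it applies Lax--Milgram to $\mathcal A_0$ on $[H_0^1(0,\pi)]^3$;
\item it uses the Fredholm alternative for $I-\lambda^2\mathcal A_0^{-1}\mathrm{diag}(\rho,J,a)$, whose kernel is trivial by the same unique-continuation argument.
\end{itemize}
Your approximate-eigenvector fallback can also be closed, but not by ``the same argument''. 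You would have to show three things:
\begin{itemize}
\item dissipation together with (H2) gives $\|\eta^n\|_W\to0$, so the memory part tends to zero rather than being compact;
\item the remaining components are precompact in $\mathcal H$, because $T^n_x$, $b\phi^n_{xx}+m\psi^n_{xx}$ and $k\psi^n_{xx}+m\phi^n_{xx}$ stay bounded in $L^2$ and $\mu^*\zeta^n_x\to0$;
\item closedness of $\mathcal A$ then yields a unit eigenvector at $i\omega$.
\end{itemize}
None of this appears in the proposal.

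\textbf{Second gap: the elimination fails at one admissible frequency, and your proposed remedies cannot repair it.} Since $\psi$ is complex-valued, the right tool is the $2\times2$ determinant of your two $\psi$-equations, not a real/imaginary split. That determinant vanishes exactly when $a\gamma+d\beta=0$ and $J\lambda^2=\xi+d^2/a$. This is admissible under \eqref{assume}, because only $\beta\neq0$ is imposed; take $\beta=-a\gamma/d$.

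At that frequency both equations collapse to $\psi_{xx}=0$. Then $(u,\phi,\psi)=\big(0,\tfrac{i\lambda\beta}{\gamma}Cx,\,Cx\big)$ solves all three equations on $(0,\tau)$ and satisfies every condition at $x=0$. So neither differentiating again nor using the boundary data at $x=0$ can force $\psi=0$ on $(0,\tau)$.

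What works is a global argument. On $(0,\tau)$ the eigenfunction must equal this linear triple. Since $u\equiv0$, the triple also solves the undamped system on $(\tau,\pi)$, so by uniqueness for the Cauchy problem at $x=\tau$ it is the whole eigenfunction. Then $\psi(\pi)=0$ forces $C=0$.

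Away from this frequency the determinant is nonzero and your argument goes through. The paper's case $J\lambda^2-\xi+d\gamma/\beta=0$ contains this frequency, and the paper only asserts that this case is easy.
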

\begin{proof}
By Theorem \ref{th1},   $0\in \rho(\mathcal A)$. For any  $0 \not=\lambda\in \mathbb{R},$
consider $
\label{reso}(i\lambda I-\mathcal{A})U=0,  $ where $ U=(u,v,\eta(\cdot),\phi,\varphi,\psi,\theta) \in D(\mathcal A).
$
This implies
$
v=i\lambda u,\;\;\varphi=i\lambda \phi,\;\; \theta =  i\lambda \psi,~
$
and that
\begin{align} \label{eq-v11}
	& -\rho\lambda^2 u-(\mu u_{x} + \mu^*(x)\zeta_x)_x - \gamma  \phi_x  +  i \beta \lambda \psi_x =0,&    x\in(0,\; \pi) , \vspace{2 mm}\\  \label{eq-v12}
	&-J\lambda^2 \phi-(b\phi_{xx} +m \psi_{xx}) +  \xi   \phi - i d \lambda \psi + \gamma u_x =0,&    x\in(0,\; \pi) , \vspace{2 mm} \\ \label{eq-v13}
	&-a\lambda^2 \psi- ( k\psi_{xx}   + m \phi_{xx})  + i d \lambda \phi + i \beta  \lambda u_x =0,&    x\in(0,\; \pi) ,\vspace{2 mm} \\
	\label{str-eta}
	& i\lambda \eta (\cdot)+v +\eta_s(\cdot)=0,&  s>0,\;   x\in(0,\; \tau) ,
\end{align}
It follows directly from \eqref{str-eta} that
$ \eta(s)= (e^{-i\lambda s} -1) u$.
Moreover, using
$Re\langle i\lambda I-\mathcal{A}U,U\rangle_{\mathcal H}=0$
together with
\begin{align}\label{diss}
	\Re \langle\mathcal A U,U\rangle_{\mathcal H}
	=-{1\over2}\int_0^{\infty}g_{ss}(s)\|\sqrt{\mu^*(x)}\eta_x\|^2 ds\leq 0,
\end{align}
we have that $\int_0^{\infty}g_{ss}(s)\|\sqrt{\mu^*(x)}\eta_x\|^2 ds=0.$
Combining this fact with assumption (H2) and the estimate $
	\inf \{
	\int_0^\infty |g_s(s)|\,  |1-e^{-i \lambda s}  |^2 ds \,| \, |\lambda|\ge \varepsilon>0 \}\ge \delta>0,$ which follows from Propositions 2.1 and 2.2 in \cite{ZQ-prop1}, we conclude that
\begin{align}
	\label{dis}
	u=v=\eta(\cdot)=0   \quad x\in(0, \tau).
\end{align}
Consequently, from \eqref{eq-v11}--\eqref{eq-v12} and \eqref{dis}, we deduce that \begin{align}
	\label{1d}
	\Big(-b +  i {\frac {m \gamma}{\beta \lambda}}\Big)\phi_{xx}  +\Big( -J\lambda^2+\xi  -  {\frac {\gamma d}{\beta}} \Big)\phi =0,  \; \;  x\in(0,\; \tau),
\end{align}
If $J\lambda^2 - \xi+ \frac{d \gamma}{\beta}\neq 0$, then the solution of \eqref{1d} is given by
$
\phi =c(e^{\sqrt{q_{1}}x} - e^{-\sqrt{q_{1}}x} ),$ where  $ x\in(0,\; \tau )$
and $q_{1} =  \frac{J\lambda^2 - \xi+ \frac{d \gamma}{\beta}}{-b + \frac{i m \gamma}{\beta \lambda}} \neq 0.$
Substituting this expression into \eqref{eq-v11} and \eqref{eq-v13}, we obtain
$
\Big[i  {\frac {a \gamma \lambda }{\beta}} +\Big( i {\frac {k \gamma}{\beta \lambda}}-m\Big) q_1 + id\lambda \Big] c (e^{\sqrt{q_{1}}x} - e^{-\sqrt{q_{1}}x} )=0$ for $  x\in(0,\; \tau ).
$
Since $
i  {{a \gamma \lambda }\over{\beta}} +\Big( i {{k \gamma}\over{\beta \lambda}}-m\Big) q_1 + id\lambda   \neq0,
$
it follows that  $\phi=0 $
on $(0,\;\tau).$ By \eqref{eq-v11}, we further infer that $\psi=0 $
on $(0,\;\tau).$
When $J\lambda^2 - \xi+ \frac{d \gamma}{\beta}= 0$,  one can also easily prove that  $\phi=\psi=0 $
on $(0,\;\tau).$
In summary, we have
$ u=v=\phi =\varphi=\psi=\theta=0 $ on $[0,\;\tau ).$
By the uniqueness of the solutions  to the ordinary differential equations \eqref{eq-v11}-\eqref{eq-v13}, we conclude that
$Ker(i\lambda I - {\mathcal A}) = \{0\}$.

Now we prove $Ran(i\lambda I - {\mathcal A}) = {\mathcal H}$  for every real $\lambda$.
Given $F=(f^{1},g^{1},h, f^{2},g^{2},f^{3},g^{3})  \in {\mathcal H},$ we seek to solve the equation $
(i \lambda  {\mathcal {I}}- {\mathcal {A}}) U    =F$ where $  U = ({u}, {v}, \eta(\cdot),  \phi, \varphi, {\psi}, {\theta}).$
Equivalently, $v = i\lambda u - f^{1} ,\;\; \varphi = i\lambda \phi-   f^{2},\;\; \theta  = i\lambda \psi-  f^{3} $ and
\begin{align}
	&   -\rho\lambda^2 u- ( \mu  u_{x} - \mu^*(x)\zeta_{x})_x -\gamma \phi_x
	+i\beta \lambda \psi_x     = \tilde{f}_1 = \rho g^{1}  +i\rho\lambda f^{1} + \beta f^3_x ,
	\label{s4}
	\vspace{2 mm} \\
	&
	-J\lambda^2 \phi+\gamma  u_x -b \phi_{xx}
	+\xi \phi  -m\psi_{xx}
	-i\lambda d \psi    =\tilde{f}_2 = J g^{2}  + i J \lambda f^2 - d f^3 ,
	\label{s5} \vspace{2 mm} \\
	& - a\lambda^2 \psi+ i\beta \lambda u_x -m \phi_{xx} +id\lambda\phi -k\psi_{xx}     =\tilde{f}_3 = a g^{3} +ia\lambda f^3+ \beta f^1_x +df^2,
	\label{s6}
	\vspace{2 mm} \\
	&\eta (s)= -\left({1-e^{-i\lambda  s}\over i\lambda }\right) (i\lambda  u  - f^1 ) + H (s),\;\;H (s) = \int_0^s e^{-i\lambda (s-\tau)}h (\tau)d\tau.\label{s61}
\end{align}
It is clear from \eqref{s6} that
\begin{align}
	\label{zeta}
	\zeta =  \int_0^{\infty}g_s(s)(e^{-i\lambda  s}-1)ds \, u + \tilde F, \;\; \tilde F= \int_0^{\infty}g_s(s)\Big( {1-e^{-i\lambda  s}\over i\lambda} f^1 +H(s)\Big) ds.
\end{align}
Under assumptions (H1)-(H2) and by Propositions 2.1-2.2 in \cite{ZQ-prop1}, we have
\begin{align}
	\label{hh}
	\inf\limits_{\big\{\lambda\in\mathbb{R}\:\big|\:|\lambda|\ge \varepsilon>0\big\}}
	\int_0^\infty |g_s(s)|\,  |1-e^{-i \lambda s}  |^2 ds \ge \delta>0,
	\;\;   \mbox{ and  }  \;\; \|H \|_W \le \|h \|_W   .
\end{align}
For any $ U_0=(u,\phi,\psi)  \in {\mathcal H}_0 = [H^1_0(0,\pi)]^3, $ we define
\begin{align*}
	{\mathcal A}_0 U _0   =
	\big(\big(\Phi u_{x})_x -\gamma \phi_x
	+i\beta \lambda \psi_x   ,\;
	\gamma  u_x -b \phi_{xx}
	+\xi \phi  -m\psi_{xx}
	-i\lambda d \psi
	,\;
	i\beta \lambda u_x -m \phi_{xx} +id\lambda\phi -k\psi_{xx}
	\big) ,
\end{align*}
where $\Phi = -\mu -\mu^*(x)\int_0^{\infty}g_s(s)(e^{-i\lambda  s}-1)ds.$
It is clear that  the equation ${\mathcal A}_0 U _0 = F_0, $ for all $ F_0 \in {\mathcal H}_0'$,  is equivalent
to $a(U_0,V) = \langle F_0,V\rangle_{{\mathcal H}_0', {\mathcal H}_0}$ for all $  V \in {\mathcal H}_0$, where   $a(\cdot,\cdot)$ is the following  continuous sesquilinear form on  ${\mathcal H}_0 \times {\mathcal H}_0$:
\begin{align*}
	a(U_0,\tilde{U}_0) =\;&
	\langle \Phi  u_x, \tilde{u}_x \rangle
	+\langle -\gamma \phi_x
	+i\beta \lambda \psi_x, \tilde{u} \rangle
	+ \langle\gamma  u_x-i\lambda d \psi, \tilde{\phi} \rangle + b\langle \phi_x, \tilde{\phi}_x \rangle
	+\xi\langle \phi , \tilde{\phi} \rangle+ m\langle\psi_x, \tilde{\phi}_x \rangle
	\\&
	+\langle i\beta \lambda u_x+id\lambda\phi, \tilde{\psi} \rangle
	+\langle m \phi_x  +k\psi_x  , \tilde{\psi}_x \rangle, \;\;\forall \; U_0=(u,\phi,\psi) ,  \;  \tilde{U}_0=(\tilde{u},\tilde{\phi},\tilde{\psi}) \in {\mathcal H}_0.
\end{align*}
A direct computation shows that there exists a constant $\tilde{a} >0$  such that
$Re\, a(U_0,U_0)  \ge \tilde{a} \|U_0\|_{{\mathcal H}_0}^2 $ for all $   U_0  \in {\mathcal H}_0.$
Then, by the Lax-Milgram's theorem, ${\mathcal A}_0$ is an
isomorphism of ${\mathcal H}_0$ onto ${\mathcal H}_0'$.
Therefore,   equations  \eqref{s4}-\eqref{s6} can be rewritten as
$  U_0 - \lambda^2{\mathcal A}_0^{-1}{\rm diag}\big(\rho, J, a \big) U_0
= {\mathcal A}_0^{-1} F_0,
$ where $F_0 = \big(\tilde{f}_1+(\mu^*(x)F_{x})_x,\;\tilde{f}_2,\; \tilde{f}_3\big)  \in {\mathcal H}_0'$.
If $U_0  =(u,\phi,\psi) \in Ker\, \big(I-  \lambda^2{\mathcal A}_0^{-1}{\rm diag}\big(\rho, J, a \big)\big) ,$ then
$
\lambda^2{\rm diag}\big(\rho, J, a \big) U_0 - {\mathcal A}_0  U_0 =0.$
It follows that
\begin{align}
	&   -\rho\lambda^2 u- (\Phi  u_x)_x -\gamma \phi_{x}
	+i\beta \lambda \psi_{x}     = 0,
	\label{s11}
	\\
	&
	-J\lambda^2 \phi+\gamma  u_{x} -b \phi_{xx}
	+\xi \phi  -m\psi_{xx}
	-i\lambda d \psi     =0,
	\label{s12}  \\
	& - a\lambda^2 \psi+ i\beta \lambda u_{x} -m \phi_{xx} +id\lambda\phi -k\psi_{xx}     =0.
	\label{s13}
\end{align}
Multiplying \eqref{s11} by $u$, \eqref{s12} by $\phi$, and \eqref{s13} by $\psi$, adding the results, and taking the imaginary parts,  we obtain
$
\mu^*(x)  u_x  \equiv 0.$
Using the same argument as in Step 1, we conclude that  $u,\; \phi,\; \psi \equiv0  $. This implies that   $Ker\, \big(I-  \lambda^2{\mathcal A}_0^{-1}{\rm diag}\big(\rho, J, a \big)\big) = \{0\}.$
Note that   ${\mathcal A}_0^{-1}$ is a compact operator on $[L^2(0,\pi)]^3$. Therefore, due to  Fredholm's
alternative,   equation $  U_0 - \lambda^2{\mathcal A}_0^{-1}{\rm diag}\big(\rho, J, a \big) U_0
= {\mathcal A}_0^{-1} F_0,$  
admits a unique solution $U_0 \in {\mathcal H}_0.$  In summary, we conclude that $ Ran \,(i \lambda  {\mathcal {I}}- {\mathcal {A}}) ={\mathcal H}$. The proof is complete.

\end{proof}

\section{Polynomial stability}
This section is devoted to the proof of the polynomial decay of the  system \eqref{sys-new}.
\begin{theorem}\label{main}
Assume that the hypotheses of Theorem \ref{th1} hold.
Then the $C_0$-semigroup $e^{t\mathcal A} $ associated with system \eqref{sys-new}
is polynomially stable with decay rate $t^{-5/8}$, i.e., there exists a constant $M>0$ such that
$$ \|e^{\mathcal A t}U\|_\mathcal{H} \le Mt^{-{5/8}}\|U \|_{\mathcal{D}(\mathcal{A})},  \quad
\forall \; U  \in  \mathcal{D}(\mathcal{A}),\; t\ge1. $$
\end{theorem}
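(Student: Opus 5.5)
By Theorem \ref{th2} we already have $i\mathbb R\subset\rho(\mathcal A)$. The plan is to apply the Borichev--Tomilov theorem: for a bounded $C_0$-semigroup with $i\mathbb R\subset\rho(\mathcal A)$, the bound
\[
\sup_{|\lambda|\ge1}|\lambda|^{-\ell}\|(i\lambda I-\mathcal A)^{-1}\|<\infty
\]
is equivalent to $\|e^{\mathcal At}U\|_{\mathcal H}\le Ct^{-1/\ell}\|U\|_{\mathcal D(\mathcal A)}$. The rate $t^{-5/8}$ therefore corresponds to $\ell=8/5$. We argue by contradiction. Suppose there are $\lambda_n\in\mathbb R$ with $|\lambda_n|\to\infty$ and $U_n\in\mathcal D(\mathcal A)$ with $\|U_n\|_{\mathcal H}=1$ such that
\[
\lambda_n^{8/5}(i\lambda_n-\mathcal A)U_n=F_n\to0 \quad\text{in } \mathcal H .
\]
We will show $\|U_n\|_{\mathcal H}\to0$, which is the desired contradiction. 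Componentwise, the resolvent equation reads $v=i\lambda u-\lambda^{-8/5}f^1$ (and similarly for $\varphi,\theta$). In addition we have the three second-order equations as in \eqref{s4}--\eqref{s6}, with right-hand sides of order $\lambda^{-8/5}$, and the transport equation for $\eta$.

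\textbf{Step 1 (dissipation on $(0,\tau)$).} Taking the real part of the inner product with $U_n$ and using \eqref{diss} together with (H2) gives
\[
\int_0^\infty -g_s\,\|\sqrt{\mu^*}\eta_x\|^2\,ds\le C\lambda^{-8/5}.
\]
Next take the $W$-inner product of the $\eta$-equation with $g_s\mu^*u_x$, integrate by parts in $s$, and use the lower bound in \eqref{hh}. This yields
\[
\|\sqrt{\mu^*}u_x\|\le C\lambda^{-4/5},\qquad\text{hence}\qquad \|\sqrt{\mu^*}\zeta_x\|\le C\lambda^{-4/5}.
\]
On a slightly smaller interval $(0,\tau-\epsilon)$, multiplying the $u$-equation by $\chi\bar u$ with a cutoff $\chi$ then gives $\|v\|_{L^2(0,\tau-\epsilon)}=o(1)$ at a polynomial rate. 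Here the term $\beta\psi_x$ must be handled using $\|\psi_x\|$ bounded, so some power of $\lambda$ is lost.

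\textbf{Step 2 (transfer to $\phi,\psi$ on the damped region).} From the $u$-equation,
\[
i\beta\lambda\psi_x=\rho\lambda^2u+(\mu u_x+\mu^*\zeta_x)_x+\gamma\phi_x+\text{small}.
\]
We pair this with $\chi\bar\psi_x$ (or with $\chi\int_0^x\bar\psi$) to obtain a bound on $\|\psi_x\|_{L^2(0,\tau-\epsilon)}$ by $\lambda^{-1}$ times quantities already controlled. Interpolation is needed for $\lambda u_x$ on the damped region, namely $\|\lambda u\|\cdot\|u_x\|$-type products. Then the $\phi$- and $\psi$-equations, multiplied by cutoffs times $\bar\phi$ and $\bar\psi$, give local smallness of $\lambda\phi$ and $\lambda\psi$ as well. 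The conditions $bk>m^2$ and $\beta\ne0$ enter here.

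\textbf{Step 3 (propagation to $(\tau,\pi)$).} On the undamped region the system is conservative with constant coefficients. We use the multiplier $q(x)\bar u_x$, $q(x)\bar\phi_x$, $q(x)\bar\psi_x$ with $q(x)=x-\pi$ (or a cutoff version), weighting the $\phi$, $\psi$ multipliers through the positive-definite matrix $\begin{pmatrix}b&m\\m&k\end{pmatrix}$. This yields an observability identity: the total energy on $(\tau,\pi)$ is bounded by boundary and interface terms at $x=\tau-\epsilon$ plus remainder terms. The interface terms, i.e.\ pointwise values $|u_x(\tau)|$, $|\lambda u(\tau)|$ and so on, are estimated by Gagliardo--Nirenberg: $|f(\tau)|^2\le C\|f\|\,\|f\|_{H^1}$ on the damped strip, combined with the local $H^1$ bounds of Steps 1--2. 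The boundary terms at $x=\pi$ carry favorable sign or vanish by the Dirichlet conditions.

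\textbf{Main obstacle.} The difficulty is tracking the powers of $\lambda$. The dissipation only gives $\lambda^{-4/5}$ control of $u_x$ on the damped region. The coupling terms $\beta\psi_{xt}$ and $\beta u_{xt}$ each cost a factor of $\lambda$, and the interface traces cost half-powers. I would first parametrize the exponent as $\ell$ and carry out Steps 1--3 keeping $\ell$ free, obtaining $\|U_n\|^2\le C\lambda^{\alpha(\ell)}$ with $\alpha$ affine in $\ell$. One then checks that $\alpha(8/5)<0$. I expect the bottleneck to be the estimate for $\psi_x$ on $(0,\tau)$ in Step 2, where $\lambda\psi_x$ must be recovered from $\lambda^2u$ and $(\mu^*\zeta_x)_x$. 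There, one should test against $\chi\int_0^x\bar\psi$, so that no derivative falls on $\zeta_x$, rather than against $\psi_x$ directly.
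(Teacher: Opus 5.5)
Your architecture matches the paper's: Borichev--Tomilov with $\ell=8/5$, a contradiction sequence, dissipation giving $\|\eta\|_W,\ \|\zeta_x\|_{L^2(0,\tau)},\ \|\sqrt{\mu^*}u_x\|=o(\lambda^{-4/5})$, local smallness of the other components on $(0,\tau)$, and then a $(\pi-x)$ multiplier on $(\tau,\pi)$. However, the step that closes the argument fails as stated, because the interface traces cannot be obtained from Gagliardo--Nirenberg. Set aside $v$ and $T=\mu u_x+\mu^*\zeta_x$. For $\psi_x,\phi_x,\theta,\varphi$ the next derivative is only $O(\lambda)$, even locally, since $k\psi_{xx}+m\phi_{xx}=ia\lambda\theta+d\varphi+\beta v_x+\dots$ and $\theta_x\approx i\lambda\psi_x$. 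So the best GN gives is $|\psi_x(\tau)|^2\lesssim\|\psi_x\|_{L^2(0,\tau)}\|\psi_{xx}\|_{L^2(0,\tau)}=o(\lambda^{1-p/8})=o(\lambda^{4/5})$, which is unbounded. To make GN work you would need local $L^2$ rates $o(\lambda^{-1/2})$ for the porous and thermal components. That is far beyond what the coupling transmits: the paper gets only $\lambda^{-p/8}$ down to $\lambda^{-p/64}$.

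The missing idea is a second multiplier identity \emph{inside} the damped region. Take $q\in C^1[0,\tau]$ with $q\ge0$, $q(0)=0$, $q(\tau)\neq0$, and test the $v$-, $\varphi$-, $\theta$-equations on $(0,\tau)$ against $2q\mu^{-1}T$, $2q\phi_x$, $2q\psi_x$. The first multiplier must be $T$, not $u_x$: pairing $(\mu^*\zeta_x)_x$ with $u_x$ would require $u_{xx}$ or $\zeta_{xx}$, neither of which is controlled. This identity expresses $q(\tau)$ times the energy density at $x=\tau$ (positive by $bk>m^2$) through $\int_0^\tau q'(\cdots)$ plus cross terms. The integral needs only the local $L^2$ smallness. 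Equivalently, use a single weight vanishing at $0$ and equal to $\pi-x$ on $(\tau,\pi)$, so that no traces appear at all.

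Second, the proposal leaves the decisive exponent count undone. In the paper, $8/5$ is fixed in exactly this identity, by the cross terms $\lambda\langle v,q\zeta_x\rangle_{L^2(0,\tau)}$ and $\langle\theta_x,qT\rangle_{L^2(0,\tau)}$. The inputs are:
\begin{itemize}
\item $\|\zeta_x\|_{L^2(0,\tau)},\|T\|_{L^2(0,\tau)}=o(\lambda^{-p/2})$;
\item $\|v\|_{L^2(0,\tau)}=o(\lambda^{-p/4})$, from testing the $v$-equation with $\lambda^{-1}\mu^*\bar v$;
\item $\|\psi_x\|_{L^2(0,\tau)}=o(\lambda^{-p/8})$, from testing it with $\lambda^{-1}\mu^*\bar\psi_x$.
\end{itemize}
The two cross terms are then $o(\lambda^{1-3p/4})$ and $o(\lambda^{1-5p/8})$, and $p=8/5$ is exactly what makes the second one $o(1)$. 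So your intuition that the weak local $\psi_x$ bound drives the rate is right, but it bites in the propagation step, not in Step 2 itself.

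In the $\psi_x$ estimate one simply integrates $\langle T_x,\lambda^{-1}\mu^*\psi_x\rangle$ by parts onto $\mu^*\psi_x$ and uses $\lambda^{-1}\|\psi_{xx}\|=O(1)$; the boundary term at $x=0$ is handled via $\lambda^{-1}|T(0)|^2$. No derivative of $\zeta_x$ ever arises. Your alternative test function $\chi\int_0^x\bar\psi$ is therefore unnecessary, and it would only produce $\lambda\|\sqrt{\chi}\psi\|^2$, which is one derivative too weak to control $\|\psi_x\|_{L^2(0,\tau)}$.
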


\begin{proof}
According to the frequency-domain criterion of Borichev and Tomilov~\cite{BT},
it suffices to prove the existence of a constant $r>0$ such that
$\inf_{\lambda\in\mathbb R,\ \|U\|_{\mathcal H}=1}
|\lambda|^{p}\|(i\lambda I-\mathcal A)U\|_{\mathcal H}\ge r $ with $p=8/5.$
Assume, by contradiction, that the above estimate does not hold.
Then there exist a sequence $\{\lambda_n\}\subset\mathbb R^+$ with
$\lambda_n\to+\infty$ and a sequence
$U^n=(u^n,v^n,\eta^n,\phi^n,\varphi^n,\psi^n,\theta^n)\subset\mathcal D(\mathcal A)$
such that
\begin{equation}
\label{norm}\|U^n\|_{\mathcal H}=1,
\qquad
\lambda_n^{p}\|(i\lambda_n I-\mathcal A)U^n\|_{\mathcal H}=o(1),\; \; p=8/5.
\end{equation}
Let $T^n = \mu u^n_x+  \mu^*(x)\zeta^n_x, $ and $\zeta^n = \int_0^{\infty}g_s(s)\eta^n(s)ds$.
It follows that
\begin{subequations}
\begin{align}
	&\lambda_n^{p}[i\lambda_n u^n - v^n]= f^n= o(1),&& \mbox{in } H_0^1(0,\pi),\label{iR-u}\\
	&\lambda_n^{p}[i\rho\lambda_n v^n -T^n_{x} - \gamma\phi^n_{x} + \beta\theta_x^n]= o(1),&& \mbox{in } L^2(0,\pi),\label{iR-v}\\
	&\lambda_n^{p}[i\lambda_n\eta^n(\cdot)+v^n+\eta^n_s(\cdot)] = h^n(s) =  o(1),&& \mbox{in } W,\label{iR-eta}\\
	&\lambda_n^{p}[i\lambda_n\phi^n - \varphi^n] =o(1),&& \mbox{in } H_0^1(0,\pi),\label{iR-phi}\\
	&\lambda_n^{p}[iJ\lambda_n\varphi^n - b\phi^n_{xx} - m\psi^n_{xx}+\xi\phi^n - d\theta^n+\gamma u^n_x] = o(1),&& \mbox{in } L^2(0,\pi),\label{iR-varphi}\\
	&\lambda_n^{p}[i\lambda_n\psi^n - \theta^n ]= o(1),&& \mbox{in } H_0^1(0,\pi),\label{iR-psi}\\
	&\lambda_n^{p}[i a\lambda_n \theta^n - k\psi^n_{xx} - m\phi^n_{xx}+d\varphi^n +\beta v^n_x ]= o(1),&& \mbox{in } L^2(0,\pi).\label{iR-theta}
\end{align}
\end{subequations}
From \eqref{iR-u}, \eqref{iR-phi}, \eqref{iR-psi}, together with $\lambda_n\to \infty$, we obtain
\begin{align}\label{norm-o}   \lambda_n \|u^n\|, \;\; \lambda_n\|\phi^n\|,\;\; \lambda_n\|\psi^n\|,\;\;
\lambda_n^{-1} \|v^n_x\|,\;\;
\lambda_n^{-1} \|\varphi^n_x\|,\;\;
\lambda_n^{-1} \|\theta^n_x\|  =  {\mathcal O}(1).
\end{align}
Using assumption (H2), the dissipativeness of
$\mathcal A $,  the identity $\lambda_n^{p} \Re   \langle i\lambda_n U_n -\mathcal A U_n,U_n\rangle_{\mathcal H} \to 0  $, and the Cauchy-Schwarz inequality, it follows that
\begin{align}\label{iR-eta-o}
\|\eta^n(\cdot)\|_W,\;\;
\|\zeta_x^n\|_{L^2(0,\tau)} = \lambda_n^{-p/2}o(1).
\end{align}
By arguments similar to those used in \eqref{s6} and \eqref{hh}, we further deduce from \eqref{iR-u}, \eqref{iR-eta} and \eqref{iR-eta-o} that
\begin{align}\label{iR-u'-o}
\| \sqrt{\mu^*(x)} u_x^n\| = \lambda_n^{-p/2} o(1)
\end{align}
Moreover, the following   estimates also hold:
\begin{align}\label{bound}
\|\lambda_n^{-1}\phi^n_{xx}\|,\;\;
\|\lambda_n^{-1}\psi^n_{xx}\|,\;\;
\|\lambda_n^{-1}T^n_{x}\| = O(1).
\end{align}
Indeed, using $\|U^n\|_{\mathcal H}=1$ together with  and  \eqref{iR-u},  \eqref{iR-varphi}, \eqref{iR-theta}, we obtain
$
\|\lambda_n^{-1}(b\phi^n_{xx}+m\psi^n_{xx})\|
=O(1),\;
\|\lambda_n^{-1}(k\psi^n_{xx}+m\phi^n_{xx})\|
=O(1).
$
Since $bk > m^2$, the first two estimates in \eqref{bound} follow immediately.
Similarly, the last estimate is a direct consequence of \eqref{iR-v} and \eqref{norm-o}.

The following proof is divided into three steps.

{\bf Step 1. Prove that
\begin{align}
	\label{step1}\| \sqrt{\mu^*(x)} v^n\| = \lambda_n^{-p/4} o(1),\;\;  \|\sqrt{\mu^*(x)} \psi^n_x\| = \lambda_n^{-p/8} o(1).
	\end{align}}
	Taking the $L^2(0,\pi)$ inner product of \eqref{iR-v} with $\lambda_n^{-1}\mu^*(x) v^n$, we obtain
	\begin{align}\label{v-eq}
\rho\| \sqrt{\mu^*(x)}v^n\|^2 \le \gamma|\langle \phi^n_x,\; \lambda_n^{-1}\mu^*(x) v^n \rangle |
+ |\langle \beta\theta^n -T^n  ,\; \lambda_n^{-1}[\mu^*(x) v^n]_x  \rangle| +\lambda_n^{-p-1}o(1)\le \lambda_n^{-p/2} o(1).
\end{align}
Using $\|U^n\|_{\mathcal H}=1$ together with \eqref{norm-o}, \eqref{iR-eta-o}, \eqref{iR-u'-o}, and \eqref{bound}, we conclude the first estimation.

Next,  taking the $L^2(0,\pi)$ inner product of \eqref{iR-v} with $\lambda_n^{-1}\mu^*(x)\psi^n_x$ and using \eqref{norm}, \eqref{v-eq} yields
\begin{align}\label{psi-eq}
\beta\|\sqrt{\mu^*(x)} \psi^n_x\|^2 & \; \le |\langle i\rho\lambda_n v^n -T^n_{x} - \gamma\phi^n_x ,\;
\lambda_n^{-1} \mu^*(x)\psi^n_x\rangle| +\lambda_n^{-p-1}o(1)
\notag \\
& \; \le  |\langle T^n,\; \lambda_n^{-1}[\mu^*(x)\psi^n_{x}]_x\rangle |
+\lambda_n^{-1}\mu^*(0) |T^n(0) | |\psi^n_x(0) | +\lambda_n^{-p/4}o(1) .
\end{align}
Moreover, taking the $L^2(0,\pi)$ inner product of \eqref{iR-v} with $2\lambda_n^{-1}\mu^*(x)T^n$ gives
\begin{align*}
\mu^*(0) \lambda_n^{-1}|T^n(0)|^2
\le  \lambda_n^{-1}\big[   2|\langle i\rho\lambda_n v^n  - \gamma\phi^n_x +\beta\theta_x^n,\;   \mu^*(x)T^n \rangle| +  \|\sqrt{ \mu^*_x(x)}T^n \|^2+\lambda_n^{-p }o(1)\big]\le \lambda_n^{-p/2} o(1) ,
\end{align*}
where we use \eqref{iR-psi}, \eqref{norm-o}, \eqref{iR-eta-o}, \eqref{iR-u'-o}, and \eqref{v-eq}.
Using the above inequality, \eqref{norm}, \eqref{bound} and interpolation,  we obtain
$\lambda_n^{-1}   |T^n(0) | |\psi^n_x(0) | \le \lambda_n^{-1}   |T^n(0) | \|\psi^n_{xx}\|^{1\over2} \|\psi^n_x\|^{1\over2} \le \lambda_n^{-p/4} o(1).$
Substituting it into \eqref{psi-eq} and applying \eqref{iR-eta-o}--\eqref{bound} yields the second estimate in \eqref{step1}.

{\bf Step 2. Prove that
\begin{align}
	\label{step2}
	\|\sqrt{\mu^*(x)} \theta^n \| = \lambda_n^{-p/16} o(1),\;\;  \|\sqrt{\mu^*(x)} \phi^n_x\|= \lambda_n^{-p/32} o(1),\;\; \|\sqrt{\mu^*(x)} \varphi^n\| =  \lambda_n^{-p/64} o(1).
\end{align}
}
Taking the  $L^2(0,\pi)$ inner product of \eqref{iR-theta} with $\lambda_n^{-1}\mu^*(x)\theta^n$ and using \eqref{iR-u}, \eqref{iR-phi} leads to
\begin{align*}
a\|\sqrt{\mu^*(x)}\theta^n\|^2 \le  (\beta \|\mu^*(x) u^n_x\|+ d\|\mu^*(x)\phi^n\| )\|\theta^n\|+\lambda_n^{-1}  | \langle k\psi^n_{x} + m\phi^n_{x } ,\; (\mu^*(x)\theta^n)_x  \rangle|  + \lambda_n^{-1-p} o(1).
\end{align*}
Since $\lambda_n^{-1}\|\sqrt{\mu^*(x)} \theta^n_x\| = \lambda_n^{-p/8} o(1) $ by   \eqref{iR-psi} and \eqref{step1},   the first estimate in \eqref{step2} follows after combining the above inequality  with \eqref{norm-o}-\eqref{bound}.
The remaining two estimates are obtained analogously by taking the $L^2(0,\pi)$ inner product of \eqref{iR-theta} with $\mu^*(x) \phi^n$, \eqref{iR-varphi} with $\lambda_n^{-1}\mu^*(x) \varphi^n$, respectively.
%

{\bf Step 3. Prove that $\|U^n\|_{\mathcal H}=o(1)$, which  leads to a contradiction.}
Let $q(x)\in C^1[0,\tau]$ be a nonnegative function such that $q(0)=0, \; q(\tau) \not=0$. Taking the $L^2(0,\tau)$ inner product of (\ref{iR-v}),  (\ref{iR-varphi}),  (\ref{iR-theta}) with $2q(x)\mu^{-1}T^n, \; 2q(x)\phi_x^n, \;2q(x)\psi_x^n$, respectively, summing the results, and applying the estimates obtained in Steps 1-2, we obtain
\begin{align}
\label{global}
\begin{array}{l}
	-  2 \mu^{-1} \Re  [\langle i \lambda_n  \rho v^n  ,\; q(x)\zeta_x^n\rangle_{L^2(0,\tau)}-\beta \langle \theta_x^n  ,\; q(x)T^n\rangle_{L^2(0,\tau)}]+  q(\tau)\big[ \rho| v^n (\tau)|^2+   \mu^{-1} | T^n (\tau)|^2\\   \noalign{\medskip}
	+J |\varphi^n(\tau)|^2+ a  |\theta^n(\tau)|^2
	+b |\phi^n_x(\tau)|^2+ k |\psi^n_x(\tau)|^2
	+ 2m Re ( \psi^n_x(\tau)\overline{\phi^n_x(\tau)})\big]
	=  o(1).
	\end{array}\end{align}
	From   \eqref{iR-psi}, \eqref{iR-eta-o},  \eqref{iR-u'-o} and \eqref{step1},  we have $| \langle i \lambda_n v^n,\; q(x)\zeta_x^n\rangle_{L^2(0,\tau)} |,\;|\langle \theta_x^n  ,\; q(x)T^n\rangle_{L^2(0,\tau)}|  =o(1)  $ when $p\ge 8/5.$
	Substituting these into \eqref{global}, we obtain
	\begin{align}
		\label{zero1}
		\rho|v^n(\tau)|^2  +    | T^n (\tau)|^2 +J |\varphi^n(\tau)|^2+ a |\theta^n(\tau)|^2
		+b |\phi^n_x(\tau)|^2+ k |\psi^n_x(\tau)|^2
		+ 2m Re ( \psi^n_x(\tau)\overline{\phi^n_x(\tau)})
		=  o(1).
	\end{align}
	
	Next, taking the $L^2(\tau,\pi)$ inner product of (\ref{iR-v}),  (\ref{iR-varphi}),  (\ref{iR-theta}) with $2(\pi-x)u^n_x, \; 2(\pi-x)\phi_x^n, \;2(\pi-x)\psi_x^n$, respectively,   summing the results, and using \eqref{zero1},  we get
	\begin{align} \label{zer2} \begin{array}{l}
			\rho\| v^n\|^2_{L^2(\tau,\pi)} + \mu\| u^n_x\|^2_{L^2(\tau,\pi)} + J\| \varphi^n\|^2_{L^2(\tau,\pi)} + b\| \phi^n_x\|^2
			+ a\| \theta^n\|^2_{L^2(\tau,\pi)}
			+ k\| \psi^n_x\|^2_{L^2(\tau,\pi)}
			-\xi\| \phi^n\|^2_{L^2(\tau,\pi)}
			\\ \noalign{\medskip}   \displaystyle
			+2 Re \big[m \langle \psi_x^n,\;  \phi_x^n\rangle_{L^2(\tau,\pi)}
			+  \langle d\theta^n,\;  (\pi-x)\phi_x^n \rangle_{L^2(\tau,\pi)}- \langle d \varphi^n,\; (\pi-x)\psi_x^n \rangle_{L^2(\tau,\pi)} \big] =o(1).
	\end{array}\end{align}
	Using \eqref{iR-phi}, \eqref{iR-psi} and  \eqref{norm-o},   we estimate
	$
	\big|Re \big[  \langle  \theta^n,\; (\pi-x)\phi_x^n \rangle_{L^2(\tau,\pi)} - \langle   \varphi^n,\;(\pi-x)\psi_x^n \rangle_{L^2(\tau,\pi)} \big]\big|
	\le \big| i\lambda^n  \langle \psi^n, \; \phi^n\rangle_{L^2(\tau,\pi)}\big| + o(1) = o(1).
	$
	Substituting these into \eqref{zer2},  using \eqref{iR-eta-o}, \eqref{iR-u'-o}, \eqref{step1}, \eqref{step2},   and the assumption $bk>m^2$, we finally obtain $\|U^n\|_{\mathcal H}=0$, which contradicts \eqref{norm}.
\end{proof}

\section*{Acknowledgments}
The project is supported by the National Natural Science Foundation of China (grants No. 12271035, 12131008) and
Beijing Municipal Natural Science Foundation (grant No. 1232018).

%


Y.N. Sun, School of Mathematics and Statistics, Beijing Institute of Technology, Beijing, 100081, P.R. China

Email address: yanansun@bit.edu.cn

Q. Zhang, 
School of Mathematics and Statistics, Beijing Institute of Technology, Beijing, 100081, P.R. China

Email address: zhangqiong@bit.edu.cn
\end{document}